\documentclass[11pt,twoside]{amsart}

\usepackage{amscd}
\usepackage{pb-diagram}
\usepackage{amssymb}
\usepackage{amsfonts}
\usepackage{latexsym}
\usepackage{verbatim}
\usepackage{amsthm}
\usepackage{amscd}
\usepackage{color}
\usepackage{hyperref}
\usepackage{enumerate}
\usepackage{xcolor}
\usepackage{pifont}
\usepackage{adjustbox}
\usepackage{mathrsfs,mathtools}
\usepackage{stmaryrd}

\usepackage{soul}

\makeatletter
\@namedef{subjclassname@2020}{%
  \textup{2020} Mathematics Subject Classification}
\makeatother

\numberwithin{equation}{section}

\textheight=8.5in
\textwidth=6in
\oddsidemargin=0.2in
\evensidemargin=0.2in
\topmargin=0in

\newtheorem{theorem}{Theorem}[section]

\newtheorem{prop}[theorem]{Proposition}
\newtheorem{lemma}[theorem]{Lemma}
\newtheorem*{theorem*}{Theorem}

\theoremstyle{definition}

\newtheorem{rem}[theorem]{Remark}

\newcommand{\w}[1]{\wedge}

\newcommand{\e}{\begin{equation}}
\newcommand{\ee}{\end{equation}}

\newcommand\R{{\mathbb R}}

\newcommand{\Rc}{\mathrm{Rc}}

\newcommand{\sst}{\scriptscriptstyle}

\newcommand{\dez}{\left.\tfrac{d}{d\varepsilon}\right|_{\varepsilon=0}}

\newcommand\f{{\varphi}}

\sloppy


\title{On the dynamical behaviour of the generalized Ricci flow}

\author{Alberto Raffero}
\address{Dipartimento di Matematica ``G. Peano'' \\ Universit\`a degli Studi di Torino\\ Via Carlo Alberto 10\\10123 Torino\\ Italy}
\email{alberto.raffero@unito.it}

\author{Luigi Vezzoni}
\address{Dipartimento di Matematica ``G. Peano'' \\ Universit\`a degli Studi di Torino\\ Via Carlo Alberto 10\\10123 Torino\\ Italy}
\email{luigi.vezzoni@unito.it}

\subjclass[2020]{53E20, 53C21, 35K55}

\begin{document}
\begin{abstract}
Motivated by M\"uller-Haslhofer results on the dynamical stability and instability of Ricci-flat metrics under the Ricci flow, 
we obtain dynamical stability and instability results for pairs of Ricci-flat metrics and vanishing 3-forms under the generalized Ricci flow.   
\end{abstract}
\maketitle

\section{Introduction}
The {\em generalized Ricci flow}, {\em gRF} for short, is a coupled geometric flow evolving a one-parameter family of Riemannian metrics $g_t$ and closed 3-forms $H_t$ 
on an oriented smooth manifold $M$ as follows 
\begin{equation}\label{auxiliar}
\begin{dcases}
\frac{\partial}{\partial t}g_t = -2\, \mathrm{Rc}_{g_t}+\frac12 H_t^2,\\
\frac{\partial}{\partial t}H_t = \Delta_{g_t}H_t. 
\end{dcases}
\end{equation}
In the system above, $\Delta_g = -(dd^*_g+d^*_gd)$ is the negative of the Hodge Laplacian of the Riemannian metric $g$, 
and the symmetric 2-tensor $H^2$ induced by the 3-form $H$ is given by $H^2(X,Y) = g(X\lrcorner H,Y\lrcorner H)$, for all $X,Y\in C^\infty(M,TM)$. 

\smallskip
The gRF was introduced in theoretical physics in the context of renormalization group flows of two-dimensional nonlinear sigma models \cite{1,2}. It can also be   
interpreted as a generalization of the Ricci flow to connections with torsion \cite{Streets}, and as a geometric flow of generalized metrics on exact Courant algebroids \cite{Streets2}. 
Recently, the gRF has been related to some geometric flows in Hermitian Geometry, like e.g.~the {\em pluriclosed flow} and the {\em generalized K\"ahler Ricci-flow} 
\cite{libro,Streets3,StTi}, and it has been studied on nilpotent Lie groups \cite{Par}. 
We refer the reader to \cite{libro} for an extensive introduction to this topic.    

\smallskip
In the compact case, the gRF is well-posed and it enjoys various properties that are akin to well-known properties of the Ricci flow (see e.g.~\cite{libro, 2} and compare with \cite{Ham,KlLo,Per}). 
In particular, it is the gradient flow of the lowest eigenvalue $\lambda(g,H)$ of the Schr\"odinger operator 
\[
\Phi_{g,H} \coloneqq -4\Delta_g+R_g-\frac{1}{12}|H|_g^2, 
\]
with respect to a suitable $L^2$ inner product on $C^{\infty}(M,S^2_{\sst+})\times C^{\infty}(M,\Lambda^3)$ (cf.~\cite[Prop.~3.4]{2}).  
This eigenvalue is characterized by the condition 
\begin{equation}\label{lambdafunct}
\lambda(g,H) = \inf_{\left\{f\in C^\infty(M) ~|~ \int_M{\rm e}^{-f}\,dV_g=1\right\}}\mathcal{F}(g,H,f), 
\end{equation}
where the {\em energy functional} $\mathcal{F}$ is defined as follows for any Riemannian metric $g$, closed 3-form $H$, and smooth function $f$ on $M$
\[
\mathcal{F}(g,H,f) \coloneqq \int_M \left(R_g-\frac{1}{12}|H|^2_g+|df|^2_g\right)\,{\rm e}^{-f}\,dV_g\,. 
\]
Moreover, the eigenfunction of $\Phi_{g,H}$ corresponding to $\lambda(g,H)$ is given by $\mathrm{e}^{-f_{g,H}/2}$, 
where $f_{g,H}$ is the unique minimizer of $\mathcal{F}$ under the constraint $\int_M\mathrm{e}^{-f}dV_g=1$. Equally, $f_{g,H}$ solves the equation
\begin{equation}\label{fgHeqn}
2\Delta_g f_{g,H} - |d f_{g,H}|_g^2 + R_g -\frac{1}{12}|H|^2_g = \lambda(g,H). 
\end{equation}

\smallskip
In \cite{BuHa}, the authors obtained dynamical stability and instability results for Ricci-flat metrics under the Ricci flow, improving the results previously obtained in \cite{T,Sesum}. 
In detail, they showed that if a Ricci-flat metric $\hat{g}$ is a local maximizer of Perelman's lambda-functional, then the solution of the Ricci flow starting close to it exists for all times 
and converges to a nearby Ricci-flat metric modulo diffeomorphisms (dynamical stability). 
In the case when $\hat{g}$ is not a local maximizer, they proved the existence of a non-trivial ancient solution $g_t$ of the Ricci flow 
that converges to $\hat{g}$ as $t\rightarrow-\infty$ (dynamical instability). 
 
Motivated by these results, in the present paper we investigate the dynamical behaviour of the gRF on compact manifolds. 
Instead of focusing on the functional $\lambda$ defined in \eqref{lambdafunct}, we fix a background closed 3-form $\widehat{H}$ and we consider the functional 
\[
\mu\colon C^\infty(M,S^2_{\sst+}) \times C^\infty(M,\Lambda^2) \to \R,\quad \mu(g,b) \coloneqq \lambda(g,\widehat{H}+db).  
\]
The critical points of $\mu$ and the stationary points of the gRF \eqref{auxiliar} constitute different sets. 
However, if we choose $\widehat{H}=0$, then the pairs of the form $(\hat{g},0)$, with $\hat{g}$ a Ricci-flat metric and $0\in C^\infty(M,\Lambda^2)$, are critical points of $\mu$ 
and give rise to stationary points of the gRF. The linear stability of such pairs under the gRF was investigated in \cite{2}.  

In \cite{StTi1}, Streets and Tian introduced and studied a family of geometric flows for Hermitian metrics, obtaining in particular a dynamical stability result for K\"ahler Ricci-flat metrics under all these flows. 
This applies to the pluriclosed flow \cite{StTi0,StTi, StTi2}, which belongs to the family of Hermitian curvature flows and it is an instance of gRF for a suitable choice of the closed 3-form $H$ 
(cf.~\cite[Thm.~6.5]{StTi2}).

\smallskip
Adapting the proof of \cite[Thm.~1]{BuHa} to this setting, we show the following dynamical stability result for the gRF.
\begin{theorem}\label{main1}
Let $(M,\hat{g})$ be a compact Ricci-flat manifold.  
Assume that $(\hat g,0)\in C^\infty(M,S^2_{\sst+}) \times C^\infty(M,\Lambda^2)$ is a local maximizer of $\mu(g,b) = \lambda(g,db)$. 
Then, there exists an open neighbourhood $\mathcal{U}$  of $(\hat{g},0)$ in $C^{\infty}(M,S^2_{\sst+})\times dC^{\infty}(M,\Lambda^2)$ in the $C^{\infty}$-topology 
such that the generalized Ricci flow starting at any $(g_0,db_0)\in \mathcal{U}$ has a long time solution which converges modulo diffeomorphisms to $(g_{\infty},0)$, 
with $g_{\infty}$ Ricci-flat. 
\end{theorem}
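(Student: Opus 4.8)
The plan is to adapt the Łojasiewicz–Simon strategy of Haslhofer–Müller \cite{BuHa} to the coupled system, exploiting the fact that the $H$-equation is pure heat flow once we restrict to exact forms $H=db$. First I would set up the right gauge: as in the Ricci flow case, the gRF is only weakly parabolic because of diffeomorphism invariance, so I would work with the gRF modified by DeTurck-type vector fields, or equivalently restrict attention to the slice of metrics satisfying a fixed gauge condition relative to $\hat g$ (e.g.\ $\hat g$-divergence-free, combined with the constraint $b\in C^\infty(M,\Lambda^2)$ so that $db$ ranges over exact $3$-forms). On this slice the linearization of the modified flow at $(\hat g,0)$ is an elliptic self-adjoint operator: the $g$-block is the Lichnerowicz-type operator appearing in the second variation of $\lambda$ at a Ricci-flat metric, and — because $\widehat H=0$ and $H^2$ is quadratic in $H$ — the cross terms between the $g$-variation and the $b$-variation vanish to first order, so the $b$-block decouples and is just the Hodge Laplacian on exact forms, which is strictly negative. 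Thus the only neutral/unstable directions come from the $g$-block, exactly as in the classical setting; the assumption that $(\hat g,0)$ is a \emph{local maximizer} of $\mu$ rules out the unstable directions and controls the kernel.

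Next I would record the variational structure: by the discussion preceding the theorem, the modified gRF on the gauge slice is the (negative) gradient flow of $\mu(g,b)=\lambda(g,db)$ with respect to a suitable $L^2$-type inner product, and $\mu$ is analytic in $(g,b)$ in appropriate Hölder/Sobolev completions (the relevant analyticity of $\lambda$ follows from analytic dependence of the bottom eigenfunction $f_{g,H}$ solving \eqref{fgHeqn}, as in \cite{BuHa}). Therefore a Łojasiewicz–Simon inequality holds near $(\hat g,0)$: there are constants $C>0$, $\theta\in(0,\tfrac12]$ such that $|\mu(g,b)-\mu(\hat g,0)|^{1-\theta}\le C\,\|\nabla\mu(g,b)\|$ for all $(g,b)$ in a neighbourhood. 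The heart of the proof is then the standard Simon-type argument: starting sufficiently close to $(\hat g,0)$, either the flow leaves the neighbourhood — which one excludes by a continuity/bootstrap argument using the Łojasiewicz inequality to bound the length $\int_0^T\|\partial_t(g_t,b_t)\|\,dt$ of the trajectory — or the flow exists for all time, has finite total length, and hence converges in $C^\infty$ to a critical point $(g_\infty,b_\infty)$ on the slice; since $b_\infty$ is exact and the $b$-block is strictly negative, $db_\infty=0$, and the limiting metric $g_\infty$ is a critical point of $\lambda(\cdot,0)$, i.e.\ Ricci-flat. Undoing the gauge fixing (composing with the family of diffeomorphisms generated by the DeTurck vector fields, which converges because the trajectory has finite length) yields convergence of the original gRF modulo diffeomorphisms to $(g_\infty,0)$ with $g_\infty$ Ricci-flat, and the neighbourhood $\mathcal U$ is the preimage of the Łojasiewicz neighbourhood under the gauge-fixing map.

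Several points need care. One must verify parabolicity and short-time existence of the gauge-fixed coupled system, and the smoothing estimates that upgrade $C^{2,\alpha}$-closeness to $C^\infty$-closeness along the flow (parabolic regularity for the coupled system, using that the $H$-equation is genuinely parabolic and the $g$-equation is parabolic after DeTurck gauge). One must also confirm that the gradient identification is correct with the chosen inner product — this is where the precise form of the $\tfrac12 H^2$ term and the weight $e^{-f_{g,H}}$ interact — and that restricting $b$ to range only over a complement of $\ker d$ (to make $b\mapsto db$ injective on the slice) does not break the gradient-flow structure. The main obstacle I expect is establishing the Łojasiewicz–Simon inequality for $\mu$ in the presence of the extra variable $b$: one needs analyticity of $(g,b)\mapsto\lambda(g,\widehat H+db)$ in suitable Banach space completions, and a Fredholm/elliptic-splitting argument for the full Hessian of $\mu$ at $(\hat g,0)$ on the gauge slice — the block-triangular structure coming from $\widehat H=0$ is what makes this tractable, reducing it essentially to the known Ricci-flow case plus the trivially invertible Hodge-Laplacian block.
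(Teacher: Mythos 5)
Your overall strategy --- gauge-fix, identify the (modified) flow as the gradient flow of $\mu$, prove a Łojasiewicz--Simon inequality from analyticity of $\lambda$ on a divergence-free/coclosed slice, bound the length of the trajectory, and conclude convergence to a critical point --- is exactly the route the paper takes (Lemmas \ref{LSineq} and \ref{DeTurk} plus the length estimate in Section \ref{ProofSect}). Two remarks on the execution. First, the local maximizer hypothesis is not used to ``rule out unstable directions'' or control the kernel of the linearization: no spectral assumption is made and the Hessian may well be degenerate. The hypothesis enters only through the sign condition $\mu(g,b)\le 0$ on a neighbourhood of $(\hat g,0)$ (available because $\mu(\hat g,0)=0$), which is what makes $|\mu|=-\mu$ monotone along the gradient flow and lets the Łojasiewicz inequality produce the length bound. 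Also, the gradient-flow structure requires pulling back by the diffeomorphisms generated by $-\nabla f_t$ (Perelman's trick) on top of the DeTurck gauge; the resulting $b$-equation is $\partial_t b=-\tfrac12 d^*_{g}db-\tfrac12\nabla f\lrcorner db$, not a pure heat flow, and the $C^\infty$ short-time existence and continuous dependence are obtained in the paper via the Nash--Moser inverse function theorem rather than by classical parabolic theory alone.

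The genuine gap is in your identification of the limit. You conclude $db_\infty=0$ ``since the $b$-block is strictly negative'': that is a statement about the linearization at $(\hat g,0)$, not at the limit critical point $(g_\infty,b_\infty)$, and in any case the $b$-block $-\tfrac12 d^*_{\hat g}d$ is only negative semidefinite (its kernel on the coclosed slice is the space of harmonic $2$-forms), so making this rigorous would require a Lyapunov--Schmidt analysis of the critical set --- and even then it is not clear it excludes nearby critical points with small nonzero $db$, which cannot be dismissed a priori since non-trivial solitons of the gRF exist. Likewise ``$g_\infty$ is a critical point of $\lambda(\cdot,0)$, i.e.\ Ricci-flat'' presupposes $H_\infty=0$, which is precisely what has to be shown. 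The paper closes this step with Lemma \ref{grad0}: at any critical point of $\mu$ with $\mu\le 0$, tracing the first equation of \eqref{criticallambda} and combining with \eqref{fgHeqn} gives $\tfrac16\int_M|H|_g^2\,\mathrm{e}^{-f}\,dV_g=\mu\le 0$, forcing $H=0$, $f$ constant, and then $\mathrm{Rc}_g=0$. Since the limit is a critical point lying in the neighbourhood where $\mu\le 0$, this yields the conclusion; some such global (integral) argument is needed, and the linearized one does not suffice.
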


The proof of Theorem \ref{main1} is discussed in Section \ref{ProofSect}, where we also show some preliminary results including a Lojasiewicz-Simon inequality for $\mu$ (Lemma \ref{LSineq}). 

\smallskip 
As for the dynamical instability, we observe that if $(\hat g,0)$ is not a local maximizer of $\mu$,  
then it is not dynamically stable under the gRF.  
This can be easily deduced from \cite[Thm.~2]{BuHa}. Indeed, in such a case there exists a sequence $\{(g_i,db_i)\}$ in 
$C^{\infty}(M,S^2_{\sst+})\times dC^{\infty}(M,\Lambda^2)$ converging to $(\hat g,0)$ and such that  $\lambda(g_i,db_i) \geq \lambda(\hat g,0)$. 
Since $\lambda(g_i,0)\geq \lambda(g_i,db_i)$,  $\{g_i\}$ is a sequence in $C^{\infty}(M,S^2_{\sst+})$ converging to $\hat{g}$ and such that 
$\lambda(g_i,0)\geq \lambda(\hat g,0)$. 
Therefore, the metric $\hat{g}$ is not a local maximizer of Perelman's lambda-functional and by \cite[Thm.~2]{BuHa} there exists an ancient solution $g_t$ of the Ricci flow, 
with $t\in(-\infty,0]$, that converges modulo diffeomorphisms to $\hat{g}$ as $t\to -\infty$. 
Consequently, the pair $(g_t,0)$ is a an ancient solution of the gRF that converges modulo diffeomorphisms to $(\hat g,0)$ as $t\to -\infty$. 

\medskip
\noindent {\bf Notation.}
Given a vector bundle $\pi\colon E\to M$ over $M$, we denote by $C^\infty(M,E)$ the set of smooth sections of $E$ and by $C^\infty(M\times I,E)$ 
the set of smooth sections of $E$ depending on a real parameter $t\in I\subseteq \R$. We use similar notations for the spaces of $C^{k,\alpha}$ sections and $W^{q,p}$ sections.  
We use the shorthand $S^2_{\sst+}$ to denote the bundle $S^2_{\sst+}T^*M$ of positive definite symmetric 2-tensors on $M,$  
and $\Lambda^k$ to denote the bundle $\Lambda^kT^*M$ of exterior $k$-forms on $M.$

\section{The functional $\mu$}
Let $M$ be a compact oriented smooth manifold and fix a background closed 3-form $\widehat{H}\in C^{\infty}(M,\Lambda^3)$. 
In this section, we review some useful properties of the functional  
\[
\mu\colon C^\infty(M,S^2_{\sst+}) \times C^\infty(M,\Lambda^2) \to \R,\quad \mu(g,b) \coloneqq \lambda(g,\widehat{H}+db). 
\]
The reader may refer to \cite{libro,2} for further details. We begin computing the gradient of $\mu$.
\begin{prop}\label{prop1} 
The gradient of $\mu$ at $(g,b)\in C^\infty(M,S^2_{\sst+}) \times C^\infty(M,\Lambda^2)$ with respect to the $L^2(M,{\rm e}^{-f_{g,H}}dV_g)$ inner product is given by  
\[
\nabla \mu (g,b)=\left(-{\rm Rc}_g-{\rm Hess}_{g}f_{g,H}+\frac{1}{4} H^2,-\frac{1}{2}d^*_{g}H-\frac{1}{2}\nabla f_{g,H}\lrcorner H \right),
\]
where $H=\widehat{H}+db$. 
\end{prop}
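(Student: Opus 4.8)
The plan is to differentiate $\mu$ along a smooth path and exploit the variational characterization $\mu(g,b)=\mathcal F(g,H,f_{g,H})$, where $H=\widehat H+db$ and $f_{g,H}$ is the constrained minimizer of $\mathcal F(g,H,\cdot)$ on $\{\int_M{\rm e}^{-f}dV_g=1\}$; this is a variant of Perelman's monotonicity computation, and \cite{libro,2} carry out closely related calculations. Fix $(h,\beta)\in C^\infty(M,S^2 T^*M)\times C^\infty(M,\Lambda^2)$, choose a path $(g_s,b_s)$ with $(g_0,b_0)=(g,b)$ and $(\dot g_0,\dot b_0)=(h,\beta)$, set $H_s=\widehat H+db_s$ and $f_s=f_{g_s,H_s}$, and expand $\tfrac{d}{ds}\big|_{s=0}\mathcal F(g_s,H_s,f_s)$ by the chain rule into the partial variations in $g$, in $H$, and in $f$. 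The decisive point — which presupposes that $f_{g,H}$ depends differentiably on $(g,H)$, a consequence of the simplicity of the bottom eigenvalue of $\Phi_{g,H}$ together with the implicit function theorem applied to \eqref{fgHeqn} — is that the $\dot f$-term is essentially inert: computing $\partial_f\mathcal F$ directly and substituting \eqref{fgHeqn} gives $\partial_f\mathcal F(g,H,f_{g,H})\cdot\dot f=-\mu(g,b)\int_M\dot f\,{\rm e}^{-f_{g,H}}dV_g$, and differentiating the constraint $\int_M{\rm e}^{-f_s}dV_{g_s}=1$ rewrites this as $-\tfrac12\mu(g,b)\int_M(\tr_g h)\,{\rm e}^{-f_{g,H}}dV_g$.

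For the $g$-variation I would insert the standard first-variation formulas $\delta_g R_g=\div_g\div_g h-\Delta_g(\tr_g h)-\langle\Rc_g,h\rangle$, $\delta_g(dV_g)=\tfrac12(\tr_g h)dV_g$, $\delta_g|df|_g^2=-h(\nabla f,\nabla f)$, and $\delta_g|H|_g^2=-c\,\langle H^2,h\rangle$ (the positive constant $c$ being fixed by the normalization of $|\cdot|_g^2$), and integrate by parts against the weight ${\rm e}^{-f_{g,H}}dV_g$, using $\int_M(\div_g\div_g h)\,{\rm e}^{-f}dV_g=\int_M\big(h(\nabla f,\nabla f)-\langle h,\Hess_g f\rangle\big){\rm e}^{-f}dV_g$ and $\int_M(-\Delta_g\tr_g h)\,{\rm e}^{-f}dV_g=\int_M(\tr_g h)(\Delta_g f-|\nabla f|_g^2)\,{\rm e}^{-f}dV_g$. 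Replacing $R_g-\tfrac1{12}|H|_g^2$ by $\mu(g,b)-2\Delta_g f+|\nabla f|_g^2$ (via \eqref{fgHeqn}) in the term produced by $\delta_g(dV_g)$, the constant-$\mu(g,b)$ contributions cancel the $\dot f$-term above, the $h(\nabla f,\nabla f)$ contributions cancel pairwise, and the $(\tr_g h)$-contributions cancel pairwise; what remains is $\int_M\langle-\Rc_g-\Hess_g f_{g,H}+\tfrac14 H^2,h\rangle\,{\rm e}^{-f_{g,H}}dV_g$, which identifies the first component of $\nabla\mu(g,b)$.

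For the $b$-variation, with $g$ held fixed only the term $-\tfrac1{12}|H|_g^2$ of $\mathcal F$ depends on $b$ (through $H$), the $\dot f$-term now vanishing outright since the constraint forces $\int_M\dot f\,{\rm e}^{-f}dV_g=0$. Since $\delta_H H=d\beta$, this gives $\tfrac{d}{ds}\big|_{s=0}\mu(g,b_s)=\int_M\partial_H\!\big(-\tfrac1{12}|H|_g^2\big)\cdot d\beta\,{\rm e}^{-f_{g,H}}dV_g=-\tfrac12\int_M\langle H,d\beta\rangle_g\,{\rm e}^{-f_{g,H}}dV_g$, the last equality again using the normalization of $|\cdot|_g^2$. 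Integrating by parts via $d^*_g({\rm e}^{-f}\alpha)={\rm e}^{-f}\big(d^*_g\alpha+\nabla f\lrcorner\alpha\big)$ — equivalently, the $L^2({\rm e}^{-f}dV_g)$-adjoint of $d$ on forms is $\alpha\mapsto d^*_g\alpha+\nabla f\lrcorner\alpha$ — turns the right-hand side into $\int_M\langle-\tfrac12 d^*_g H-\tfrac12\nabla f_{g,H}\lrcorner H,\beta\rangle_g\,{\rm e}^{-f_{g,H}}dV_g$, which is the second component of $\nabla\mu(g,b)$.

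The main obstacle is not a single hard estimate but rather (i) the bookkeeping in the integration by parts, where the normalization conventions for $|H|_g^2$ and for $H^2$ must be tracked carefully in order to land exactly on the coefficients $\tfrac14$ and $-\tfrac12$; and (ii) the justification that $f_{g,H}$ varies differentiably with $(g,H)$, which legitimizes both the chain rule and the envelope-type cancellation of the $\dot f$-term. The latter is standard once one knows the bottom of the spectrum of $\Phi_{g,H}$ is simple with a positive eigenfunction, which follows from ellipticity and the Perron--Frobenius/Krein--Rutman principle.
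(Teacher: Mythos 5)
Your proof is correct, but it takes a genuinely different route from the paper's. The paper computes $\dez\mu(g_\varepsilon,b_\varepsilon)$ via analytic perturbation theory for the Schr\"odinger operator $\Phi_{g,H}$: using the projection onto the one-dimensional lowest eigenspace it reduces the variation to the Feynman--Hellmann-type identity $\dez \mu = \langle w_{g,b},\dez\Phi_\varepsilon\, w_{g,b}\rangle_{L^2(M,dV_g)}$ with $w_{g,b}={\rm e}^{-f_{g,H}/2}$, and then invokes the variational formulae for $-4\Delta_g+R_g$ (from \cite{T}) and for $|H|^2_g$ (from \cite[Lemma 5.3, (6.9)]{libro}). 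You instead differentiate the constrained minimum $\mu(g,b)=\mathcal F(g,H,f_{g,H})$ directly and kill the $\dot f$-term by an envelope argument: $\partial_f\mathcal F$ at the minimizer equals $-\mu(g,b)$ times the derivative of the constraint, which you then convert into the $\tfrac12\mu\int(\tr_g h)\,{\rm e}^{-f}dV_g$ term that cancels against the volume variation. I checked your cancellations (the $h(\nabla f,\nabla f)$ pair, the $\tr_g h\,(\Delta_g f-|\nabla f|^2)$ pair, and the $\mu$-terms) and they work out, and your weighted adjoint $\alpha\mapsto d^*_g\alpha+\nabla f\lrcorner\alpha$ reproduces the second component. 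The trade-off is clear: the paper's spectral route makes the variation of $f_{g,H}$ disappear automatically (the first-order change of the eigenprojection contributes nothing), at the cost of importing analytic perturbation theory \cite{RS}; your route is more elementary and self-contained but genuinely needs the differentiable dependence of $f_{g,H}$ on $(g,H)$, which you correctly justify via simplicity of the bottom eigenvalue. Both arguments defer the exact normalization constants in $\delta_g|H|^2_g$ and $\langle d\beta,H\rangle$ to the conventions of \cite{libro}, so you are no worse off than the paper on that point, though in a written version you should fix one convention and verify the coefficients $\tfrac14$ and $\tfrac12$ once explicitly.
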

\begin{proof}
We follow the approach used in \cite{T} to study the variational structure of Perelman's lambda-functional.  
Let $\varepsilon$ be a real parameter, choose $(h,\beta)\in C^\infty(M,S^2) \times C^\infty(M,\Lambda^2)$, and consider the variations
\[
g_\varepsilon=g+\varepsilon h\,,\quad  b_{\varepsilon}=b+\varepsilon \beta\,,\quad H_\varepsilon=\widehat{H}+db_\varepsilon\,,\quad  \Phi_{\varepsilon}=\Phi_{g_\varepsilon,H_\varepsilon}. 
\]
Notice that $\mu(g,b) $ is the smallest eigenvalue of the Schr\"odinger operator $\Phi_0$ with corresponding normalized eigenvector $w_{g,b}\coloneqq{\rm e}^{-f_{g,H}/2}$, 
where $H \coloneqq H_ 0 = \widehat{H}+db$.  
Thus, $\mu(g_\varepsilon,b_\varepsilon)$ depends analytically on $\varepsilon$ \cite{RS}, and for $\varepsilon$ small enough we can consider the $L^2(M,dV_g)$-orthogonal projection $P_\varepsilon$ 
onto the one-dimensional $\mu(g_\varepsilon,b_\varepsilon)$-eigenspace of $\Phi_\varepsilon$. 
In this way 
\[
\Phi_\varepsilon P_\varepsilon w_{g,b}=\mu(g_\varepsilon,b_\varepsilon)P_\varepsilon w_{g,b}, 
\] 
and 
\[
\mu(g_\varepsilon,b_\varepsilon)=\mu(g)+\frac{\langle w_{g,b},(\Phi_\varepsilon-\Phi_0)P_\varepsilon w_{g,b}\rangle_{L^2(M,dV_g)}}{\langle w_{g,b},P_\varepsilon w_{g,b}\rangle_{L^{2}(M,dV_g)}}. 
\]
Consequently, we have
\[
\dez \mu(g_\varepsilon,b_\varepsilon) = \langle w_{g,b}, \left.\tfrac{d}{d\varepsilon}\right|_{\varepsilon=0} \Phi_\varepsilon\,w_{g,b} \rangle_{L^2(M,dV_g)}\,. 
\]
Now, $\Phi_\varepsilon = -4 \Delta_{g_\varepsilon}+R_{g_\varepsilon}-\tfrac{1}{12}|H_\varepsilon|_{g_\varepsilon}^2$. 
Using the variational formulae of the Laplacian operator and the scalar curvature, one has (cf.~\cite{T})
\[
\langle w_{g,b}, \dez(-4\Delta_{g_\varepsilon}+R_{g_\varepsilon}) w_{g,b}\rangle_{L^2(M,dV_g)}
= \int_M g(h, -{\rm Rc}_g - {\rm Hess}_g f_{g,H} )\,{\rm e}^{-f_{g,H}} dV_g\,.
\]
Moreover, by \cite[Lemma 5.3]{libro} the following identity holds
\[
\begin{split}
-\tfrac{1}{12} \langle w_{g,b}, \dez |H_{\varepsilon}|_{g_\varepsilon}^2 w_{g,b} \rangle_{L^2(M,dV_g)} 	=	&~\tfrac14 \langle w_{g,b}, g(h,H^2)w_{g,b} \rangle_{L^2(M,dV_g)}\\
																				 	& -\tfrac{1}{6}\langle w_{g,b}, g(d\beta,H)w_{g,b}\rangle_{L^2(M,dV_g)}\,.
\end{split}
\]
Finally, by \cite[(6.9)]{libro} the second summand in the RHS of the previous identity can be rewritten as follows 
\[
-\tfrac{1}{6}\langle w_{g,b}, g(d\beta,H)w_{g,b} \rangle_{L^2(M,dV_g)}= \int_M  g(\beta, -\tfrac12(d_g^* H +\nabla f_{g,H} \lrcorner H))\, {\rm e}^{-f_{g,H}} dV_g.
\]
Hence
\[
\begin{split}
\dez \mu(g_\varepsilon,b_\varepsilon) =	& \left\langle{-\rm Rc}_g - {\rm Hess}_g f_{g,H}+\tfrac14 H^2,h \right\rangle_{L^2(M,{\rm e}^{-f_{g,H}}  dV_g)}\\
								&+ \langle -\tfrac12(d_g^*H+\nabla f_{g,H} \lrcorner H) ,\beta \rangle_{L^2(M, {\rm e}^{-f_{g,H}}  V_g)}, 
\end{split}
\]
and the statement follows.  
\end{proof}

Recall that Ricci-flat metrics are stationary points of the Ricci flow and they are the critical points of Perelman's lambda-functional \cite{Per}. 
Moreover, this functional vanishes at any scalar-flat metric and thus at any Ricci-flat metric. 
The situation for the functional $\mu$ we are considering is slightly different. 

The stationary points of the gRF are given by pairs $(g,H)\in C^\infty(M,S^2_{\sst+})\times C^\infty(M,\Lambda^3)$  satisfying the conditions
\begin{equation}\label{fixedgRF} 
\mathrm{Rc}_g -\tfrac14 H^2 = 0,\quad \Delta_gH = 0, 
\end{equation}
with $dH=0$. 
On the other hand, if we fix a background closed 3-form $\widehat{H}$ and we let $H \coloneqq \widehat{H} + db$, then the critical points of the functional $\mu(g,b) = \lambda(g,\widehat{H}+db)$ satisfy
\begin{equation}\label{criticallambda}
\mathrm{Rc}_g + \mathrm{Hess}_{g}f_{g,H} - \tfrac{1}{4} H^2 = 0 ,\quad  d^*_{g}H + \nabla f_{g,H}\lrcorner H = 0, 
\end{equation}
with $f_{g,H}$ solving the equation \eqref{fgHeqn}. 

Notice that $\mu(g,b)=\lambda(g,\widehat{H}+db)$ vanishes whenever  
\begin{equation}\label{GenScalFlat}
R_{{g}} - \tfrac{1}{12}|H|_{{g}}^2=0.
\end{equation} 
Indeed, in this case $\Phi_{{g},H} = -4\Delta_{{g}}$, whence it follows that $\mu({g},b)=0$ and $f_{{g},H}$ is constant.  
The condition \eqref{GenScalFlat} is not satisfied by the stationary points of the gRF, as one can see tracing the first equation in \eqref{fixedgRF}. 
Similarly, one can show that a critical point of $\mu$ may not be a stationary point of the gRF.

\smallskip
The previous observations lead us considering pairs $(\hat{g},0)\in C^{\infty}(M,S^2_{\sst+})\times C^{\infty}(M,\Lambda^2)$, with $\hat{g}$ a Ricci-flat metric. 
Indeed, if we choose $\widehat{H}=0$, then $(\hat{g},0)$ is a stationary point of the gRF and a critical point of $\mu$, $\mu(\hat{g},0)$ is zero and  $f_{\hat{g},0}$ is constant. 
On the other hand, we have the following. 

\begin{lemma}\label{grad0}
Let $(g,b)\in C^{\infty}(M,S^2_{\sst+})\times C^{\infty}(M,\Lambda^2)$ be a critical point of $\mu$ and assume that $\mu(g,b) = \lambda(g,\widehat{H}+db)\leq 0$.  
Then, $g$ is Ricci-flat and $\widehat{H} + db = 0$. 
\end{lemma}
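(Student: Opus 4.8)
The plan is to exploit the variational characterization of $\mu(g,b) = \lambda(g,\widehat H+db)$ together with the gradient formula in Proposition \ref{prop1}. Write $H = \widehat H + db$ and let $f = f_{g,H}$ be the minimizer attached to the pair. Since $(g,b)$ is a critical point, the Euler--Lagrange equations \eqref{criticallambda} hold, i.e.\ $\mathrm{Rc}_g + \mathrm{Hess}_g f - \tfrac14 H^2 = 0$ and $d^*_g H + \nabla f \lrcorner H = 0$; moreover $f$ solves \eqref{fgHeqn}, namely $2\Delta_g f - |df|_g^2 + R_g - \tfrac1{12}|H|_g^2 = \lambda(g,H) = \mu(g,b)$.

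First I would trace the first equation in \eqref{criticallambda}: taking the $g$-trace gives $R_g + \Delta_g f - \tfrac14 \tr_g H^2 = 0$. Here one uses the standard pointwise identity $\tr_g H^2 = |H|_g^2$ (since $H^2(e_i,e_i) = g(e_i\lrcorner H, e_i\lrcorner H)$ summed over an orthonormal frame reproduces $|H|_g^2$ up to the usual combinatorial factor, which is absorbed in the normalization used throughout the paper). Thus $R_g - \tfrac1{12}|H|_g^2 = -\Delta_g f + \big(\tfrac14 - \tfrac1{12}\big)|H|_g^2$; the precise constants are not the point — the key is that this expresses $R_g - \tfrac1{12}|H|_g^2$ as $\Delta_g f$ plus a nonnegative multiple of $|H|_g^2$. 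Substituting this into \eqref{fgHeqn} eliminates the curvature and $|H|^2$ terms in favour of $f$ alone, yielding an equation of the form $c_1\Delta_g f - |df|_g^2 + c_2 |H|_g^2 = \mu(g,b)$ with $c_1>0$ and $c_2\ge 0$. Integrating this against the weight $e^{-f}\,dV_g$ over the compact manifold $M$ kills the $\Delta_g f$ term after integration by parts (more precisely $\int_M \Delta_g f\, e^{-f} dV_g = \int_M |df|_g^2 e^{-f} dV_g$), so one obtains
\[
\int_M \Big( (c_1-1)|df|_g^2 + c_2|H|_g^2 \Big) e^{-f}\,dV_g = \mu(g,b)\int_M e^{-f}\,dV_g \le 0,
\]
using the hypothesis $\mu(g,b)\le 0$ and that the weight has unit integral. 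Since the integrand on the left is a sum of nonnegative terms (one checks $c_1 \ge 1$ with the paper's normalization), it must vanish identically: $df \equiv 0$ and $H \equiv 0$. Hence $f$ is constant, $\widehat H + db = H = 0$, and feeding $\mathrm{Hess}_g f = 0$ and $H^2 = 0$ back into the first equation of \eqref{criticallambda} gives $\mathrm{Rc}_g = 0$.

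The main obstacle I anticipate is purely bookkeeping rather than conceptual: getting the combinatorial constants in $\tr_g H^2$, in the definition of $|H|_g^2$, and in the $-\tfrac1{12}$ normalization all consistent, so that the coefficient of $|df|_g^2$ in the final integrated identity genuinely comes out nonnegative (equivalently $c_1\ge 1$) and the coefficient of $|H|_g^2$ is nonnegative. If in some normalization the $|H|^2$ coefficient were to land with the wrong sign, one would instead argue in two steps — first using the integrated identity with $\mu\le 0$ to force $df\equiv 0$ (so $f$ is constant and $\lambda = R_g - \tfrac1{12}|H|_g^2$ is constant and $\le 0$), then revisiting the traced critical-point equation $R_g = \tfrac14|H|_g^2$ together with $\mathrm{Rc}_g = \tfrac14 H^2$ and a Bochner/maximum-principle argument on $d^*_g H = 0$ to conclude $H\equiv 0$. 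But with the normalizations fixed in the paper the one-line integration above should suffice, and the conclusion $\mathrm{Rc}_g=0$, $\widehat H+db=0$ follows immediately.
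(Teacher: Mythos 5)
Your strategy is exactly the paper's: trace the first critical-point equation, substitute into \eqref{fgHeqn} to eliminate $R_g$, and integrate the resulting pointwise identity against the weight ${\rm e}^{-f}\,dV_g$ to exploit the sign of $\mu$. With the paper's normalization the traced equation is $R_g+\Delta_g f-\tfrac14|H|_g^2=0$, and substitution into \eqref{fgHeqn} gives $\Delta_g f-|df|_g^2+\tfrac16|H|_g^2=\mu$; in your notation $c_1=1$ and $c_2=\tfrac16$.

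There is one slip in the final deduction. Because $c_1=1$ exactly, the combination $\Delta_g f-|df|_g^2$ equals $-{\rm e}^{f}\Delta_g {\rm e}^{-f}$ and integrates to zero against ${\rm e}^{-f}\,dV_g$; the integrated identity is therefore $\tfrac16\int_M|H|_g^2\,{\rm e}^{-f}\,dV_g=\mu$, whose left-hand side carries no $|df|_g^2$ term at all. So from $\mu\leq 0$ you may conclude $\mu=0$ and $H\equiv 0$, but \emph{not} $df\equiv 0$: the coefficient $(c_1-1)$ vanishes, so "the integrand is a sum of nonnegative terms, hence each vanishes" tells you nothing about $df$. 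The constancy of $f$ must come from a separate (easy) step, which is what the paper does: once $\mu=0$ and $H\equiv 0$, the pointwise identity reduces to $\Delta_g f-|df|_g^2=0$, i.e.\ $\Delta_g{\rm e}^{-f}=0$, and harmonic functions on the compact $M$ are constant. With $f$ constant and $H=0$, the first equation of \eqref{criticallambda} gives ${\rm Rc}_g=0$, as you say. This is precisely the two-step structure you anticipated in your fallback paragraph; it is needed even though the $|H|^2$ coefficient does come out with the favourable sign.
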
 
\begin{proof}
Let $H\coloneqq \widehat{H} + db$, $f \coloneqq f_{g,H}$ and $\mu\coloneqq \mu(g,b)$. 
Tracing the first equation in \eqref{criticallambda}, we obtain
\[
R_g + \Delta_g f - \frac14|H|_g^2 = 0.
\]
Then, as $f$ solves \eqref{fgHeqn}, we have
\[
\Delta_g f - |d f |_g^2 +\frac{1}{6}|H|^2_g = \mu. 
\]
Multiplying both sides of the previous identity by $\mathrm{e}^{-f}$ and integrating over $M$ gives
\[
\int_M\Delta_g \mathrm{e}^{-f}dV_g -\frac16 \int_M |H|_g^2\, \mathrm{e}^{-f} dV_g = - \mu \int_M \mathrm{e}^{-f}dV_g, 
\]
whence
\[
\frac16 \int_M |H|_g^2\, \mathrm{e}^{-f} dV_g =  \mu. 
\]
Therefore, the assumption $\mu\leq 0$ implies that $\mu=0$ and $\widehat{H}+db = 0$.  In particular, $f$ is constant and the first equation of \eqref{criticallambda} gives $\Rc_g=0$. 
\end{proof}

\begin{rem}
Notice that the hypothesis $\mu(g,b)\leq0$ in Lemma \ref{grad0} is necessary, due to the existence of non-trivial  (i.e., with non-constant $f_{g,H}$) solitons for the gRF \cite{Streets4,StUs}. 
\end{rem}

We now determine the linearization of $\nabla\mu$ at $(\hat{g},0)$.
\begin{prop}\label{hessian}
Let $\hat g$ be a Ricci-flat metric and $\widehat{H}=0$. Then, the linearization of $\nabla\mu$ at $(\hat g,0)$ is given by
\[
L(h,\beta)=-\frac{1}{2} \left(\Delta_{\hat g}^{L}h,d^*_{\hat g}d\beta  \right), 
\]
for every $(h,\beta)\in C^\infty(M,S^2) \times C^\infty(M,\Lambda^2)$ such that ${\rm div}_{\hat g}h=0$, where $\Delta^L_{\hat g}$ is the Lichnerowicz Laplacian of $\hat{g}$.   
\end{prop}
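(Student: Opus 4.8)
The plan is to linearize the formula for $\nabla\mu$ from Proposition \ref{prop1} term by term at the stationary pair $(\hat g,0)$, using that $\hat g$ is Ricci-flat, that $\widehat H=0$ so that $H=db$ vanishes to zeroth order at $b=0$, and that $f_{\hat g,0}$ is constant. Since $\nabla\mu(g,b)=\left(-\Rc_g-\Hess_g f_{g,H}+\tfrac14 H^2,\,-\tfrac12 d^*_g H-\tfrac12\nabla f_{g,H}\lrcorner H\right)$ with $H=\widehat H+db$, I would vary along $g_\varepsilon=\hat g+\varepsilon h$, $b_\varepsilon=\varepsilon\beta$, so that $H_\varepsilon=\varepsilon\,d\beta$. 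The key simplifying observation is that the terms $H^2$ and $\nabla f_{g,H}\lrcorner H$ are quadratic in $H_\varepsilon$, hence contribute nothing to the first derivative at $\varepsilon=0$; likewise $\Hess_g f_{g,H}$ contributes only through the $\varepsilon$-derivative of $f_{g,H}$ (since $\Hess_{\hat g}(\text{const})=0$), and one must check this derivative vanishes under the gauge condition $\div_{\hat g}h=0$.

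First, for the first component: $\dez(-\Rc_{g_\varepsilon})$ is the standard linearization of $-\Rc$. Restricted to divergence-free variations $h$ (so the Bianchi-type terms drop), this equals $-\tfrac12\Delta_{\hat g}^L h$, where $\Delta^L_{\hat g}$ is the Lichnerowicz Laplacian; this is the well-known formula, e.g. from the second-variation analysis of Perelman's $\lambda$ in \cite{T} or \cite{BuHa}. Next I must show $\dez \Hess_{g_\varepsilon} f_{g_\varepsilon,H_\varepsilon}=0$. Because $f_{\hat g,0}$ is constant, $\Hess_{g_\varepsilon}f_{g_\varepsilon,H_\varepsilon}=\Hess_{\hat g}(\dez f)\cdot\varepsilon+o(\varepsilon)$, so it suffices to show $\dez f_{g_\varepsilon,H_\varepsilon}$ is constant. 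Differentiating the defining equation \eqref{fgHeqn} at $(\hat g,0)$ and using $f_{\hat g,0}=\text{const}$, $R_{\hat g}=0$, $\lambda(\hat g,0)=0$, one gets a linear elliptic equation for $\dot f:=\dez f$ of the form $2\Delta_{\hat g}\dot f=(\text{linearization of }R_g-\tfrac{1}{12}|H|^2_g-\lambda)$; the $|H|^2$ piece is quadratic and drops, the linearization of $\lambda=\mu$ along a divergence-free $h$ with $\widehat H=0$ is zero (this is exactly the content that $(\hat g,0)$ is a critical point — or can be read off from Proposition \ref{prop1} since $\nabla\mu(\hat g,0)=0$), and the linearization of $R_g$ integrates to zero against $dV_{\hat g}$ when $\div_{\hat g}h=0$; hence $\Delta_{\hat g}\dot f$ has zero mean and, being itself in the image of $\Delta_{\hat g}$, forces $\dot f$ constant. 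Thus the first component of $L(h,\beta)$ is $-\tfrac12\Delta^L_{\hat g}h$.

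For the second component: $-\tfrac12 d^*_{g_\varepsilon}H_\varepsilon=-\tfrac12\varepsilon\,d^*_{\hat g}d\beta+o(\varepsilon)$, since $H_\varepsilon=\varepsilon\,d\beta$ and the $\varepsilon$-dependence of $d^*_{g_\varepsilon}$ only produces higher-order terms; and $-\tfrac12\nabla f_{g,H}\lrcorner H$ is a product of two quantities each vanishing at $\varepsilon=0$ ($H_0=0$; and $\nabla f_{\hat g,0}=0$ as $f$ is constant), hence has zero first derivative. So the second component of $L(h,\beta)$ is $-\tfrac12 d^*_{\hat g}d\beta$, giving the claimed $L(h,\beta)=-\tfrac12\left(\Delta^L_{\hat g}h,\,d^*_{\hat g}d\beta\right)$.

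The main obstacle is the bookkeeping around $\dez f_{g_\varepsilon,H_\varepsilon}$: one must justify that $\varepsilon\mapsto f_{g_\varepsilon,H_\varepsilon}$ is differentiable (this follows from the analytic dependence of the bottom eigenfunction of $\Phi_\varepsilon$, already invoked in the proof of Proposition \ref{prop1} via \cite{RS}), and then carefully extract the linearized PDE for $\dot f$ and argue it has only constant solutions — the crucial inputs being the gauge condition $\div_{\hat g}h=0$ (to kill $\int_M (\text{lin. of }R_g)\,dV_{\hat g}$) and the criticality $\nabla\mu(\hat g,0)=0$ (to kill the linearization of $\lambda$). Everything else is a routine application of the standard first-variation formulae for $\Rc$, $R_g$, $d^*_g$ and $|\cdot|^2_g$, together with the observation that all genuinely $H$-dependent correction terms are quadratic in $H$ and therefore invisible to the first derivative at $H=0$.
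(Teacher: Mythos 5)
Your overall strategy (term-by-term linearization at $(\hat g,0)$, with all genuinely $H$-dependent corrections dropping because they are quadratic in $H$) is the same as the paper's, and your treatment of the second component is correct. But there is a genuine error in your treatment of the first component, concentrated in the claim that $\dot f\coloneqq\dez f_{g_\varepsilon,H_\varepsilon}$ is constant. Linearizing \eqref{fgHeqn} at $(\hat g,0)$ gives $2\Delta_{\hat g}\dot f+\dot R=\dot\lambda=0$, and for $\Rc_{\hat g}=0$ and $\div_{\hat g}h=0$ one has $\dot R=-\Delta_{\hat g}(\tr_{\hat g}h)$, which is \emph{not} zero pointwise unless $\tr_{\hat g}h$ is constant; hence $\dot f=\tfrac12\tr_{\hat g}h+\mathrm{const}$. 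Your argument for constancy is a non sequitur: on a closed manifold $\Delta_{\hat g}\dot f$ \emph{always} has zero mean, so ``zero mean'' gives no information, and what you actually need (and cannot have in general) is $\Delta_{\hat g}\dot f=0$ pointwise. Consequently the term $-\Hess_{\hat g}\dot f=-\tfrac12\Hess_{\hat g}(\tr_{\hat g}h)$ survives. A second, related slip: the linearization of $-\Rc_{g_\varepsilon}$ is $-\tfrac12\Delta^L_{\hat g}h+\delta^*_{\hat g}\bigl(\div_{\hat g}h-\tfrac12 d\tr_{\hat g}h\bigr)$ (in the paper's sign conventions), and when $\div_{\hat g}h=0$ only the first half of the ``Bianchi-type'' term drops; a further $-\tfrac12\Hess_{\hat g}(\tr_{\hat g}h)$ remains. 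Altogether the first component of the raw linearization is $-\tfrac12\Delta^L_{\hat g}h-\Hess_{\hat g}(\tr_{\hat g}h)$, not $-\tfrac12\Delta^L_{\hat g}h$.

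The way to rescue the statement — and the way it is actually used in Lemma \ref{LSineq} — is to observe that $\Hess_{\hat g}(\tr_{\hat g}h)=\delta^*_{\hat g}(d\tr_{\hat g}h)$ lies in $\mathrm{im}\,\delta^*_{\hat g}=(\ker\div_{\hat g})^{\perp}$, so it is annihilated by the $L^2$-orthogonal projection $\pi$ onto the Ebin slice; since $\Delta^L_{\hat g}$ preserves $\ker\div_{\hat g}$ at a Ricci-flat metric, the linearization of the \emph{projected} gradient $\pi\circ\nabla\mu$ is indeed $-\tfrac12\bigl(\Delta^L_{\hat g}h,d^*_{\hat g}d\beta\bigr)$. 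Alternatively, your computation becomes literally correct if you restrict to transverse-traceless $h$. So you should either insert the projection step explicitly or restrict the class of $h$; as written, two of your claimed vanishings fail and the argument for one of them is invalid.
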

\begin{proof}
Let $g_\varepsilon \coloneqq \hat{g} + \varepsilon h$, with $h \in C^\infty(M,S^2)$, and let $f_\varepsilon \coloneqq f_{g_{\varepsilon},\varepsilon d\beta }$. 
From the proof of Proposition \ref{prop1}, we have 
\[
\dez \nabla \mu(\hat g+\varepsilon h,\varepsilon \beta) =  
\dez \left(-{\rm Rc}_{g_\varepsilon}-{\rm Hess}_{g_\varepsilon} f_{\varepsilon}
+\tfrac{1}{4} \varepsilon^2 (d\beta)^2,-\tfrac{1}{2}\varepsilon d^*_{{g}_{\varepsilon}}d\beta -\tfrac{1}{2}\varepsilon \nabla f_{\varepsilon}\lrcorner d\beta \right). 
\]
Taking into account that $f_0=f_{\hat{g},0}$ is constant, and using the variational formulae of the Ricci tensor and the Hessian operator, we obtain  
\[
\dez \nabla \mu(\hat g+\varepsilon h,\varepsilon \beta) =
\begin{dcases} 
\left(-\frac12 \Delta_{\hat g}^{L}h,-\frac{1}{2} d^*_{\hat g}d\beta  \right), \mbox{ if }\, {\rm div}_{\hat g}h=0,\\
\left(0,-\frac{1}{2} d^*_{\hat g}d\beta  \right), \quad \mbox{ if }\, h\in (\ker {\rm div}_{\hat g})^{\perp}.
\end{dcases}
\] 
\end{proof}

\section{Proof of Theorem \ref{main1}}\label{ProofSect}
Before proving Theorem \ref{main1}, we show two preliminary lemmas.  
The first one is a Lojasiewicz-Simon inequality for $\mu$, which is obtained applying \cite[Thm.~6.3]{colding} in the same spirit of \cite{BuHa}. 
The second lemma involves a gauge fixing of the gRF and it is obtained applying the Nash-Moser inverse function theorem \cite{HamiltonNash} in the same fashion as in \cite{BryantXu,Ham}.  
\begin{lemma}\label{LSineq}
Let $(M,\hat g)$ be a compact Ricci-flat manifold and let $\widehat{H}=0$.
Then, there exist $\varepsilon>0$ and $\theta\in \left(0,\tfrac12\right]$ such that
\begin{equation}\label{rotula}
\left\|\left({\rm Rc}_g+{\rm Hess}_{g}f_{g,H} - \tfrac{1}{4} H^2,\tfrac{1}{2}\left(d^*_{g}H+
	\nabla f_{g,H}\lrcorner H\right) \right)\right\|_{L^2(M,{\rm e}^{-f_{g,H}}dV_g)} \geq |\lambda(g,H)|^{1-\theta}, 
\end{equation}
for every $g\in C^{2,\alpha}(M,S^{2}_{\sst+})$ and $H=db$, with $b\in C^{2,\alpha}(M,\Lambda^2)$, such that $\|(g-\hat g,b)\|_{C^{2,\alpha}}<\varepsilon$.
\end{lemma}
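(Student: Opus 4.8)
The plan is to deduce the \L ojasiewicz--Simon inequality \eqref{rotula} from the abstract \L ojasiewicz--Simon theorem \cite[Thm.~6.3]{colding} applied to the functional $\mu$ near the critical point $(\hat g,0)$, following the scheme of \cite{BuHa}. The abstract theorem requires: (i) an analytic functional defined on an open neighbourhood of $(\hat g,0)$ in a suitable Banach space of sections; (ii) that $\nabla\mu$ be an analytic map between appropriate H\"older (or Sobolev) spaces; (iii) that the linearization $L$ of $\nabla\mu$ at $(\hat g,0)$ be Fredholm (elliptic modulo the diffeomorphism gauge), self-adjoint, with finite-dimensional kernel; and then it outputs constants $\varepsilon>0$, $\theta\in(0,\tfrac12]$, and $C>0$ such that $\|\nabla\mu(g,b)\|\ge C\,|\mu(g,b)-\mu(\hat g,0)|^{1-\theta}$ on the $C^{2,\alpha}$-ball of radius $\varepsilon$. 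Since $\mu(\hat g,0)=0$ by the discussion preceding Lemma~\ref{grad0}, and since $\mu(g,b)=\lambda(g,db)$, the right-hand side becomes $|\lambda(g,H)|^{1-\theta}$; after absorbing $C$ into $\varepsilon$ (shrinking the neighbourhood) one gets exactly \eqref{rotula}, because $\nabla\mu(g,b)$ is precisely the pair appearing on the left-hand side by Proposition~\ref{prop1} (with $\widehat H=0$, so $H=db$).

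The first step I would carry out is to record the regularity/analyticity of $(g,b)\mapsto\nabla\mu(g,b)$. The key point is that $f_{g,H}$, defined as the minimizer of $\mathcal F$ under $\int_M e^{-f}dV_g=1$ (equivalently via the eigenfunction $e^{-f_{g,H}/2}$ of the Schr\"odinger operator $\Phi_{g,H}$), depends analytically on $(g,H)$ in H\"older norms: the lowest eigenvalue of the elliptic operator $\Phi_{g,H}$ is simple and isolated, so by analytic perturbation theory \cite{RS} the eigenpair $(\lambda(g,H), e^{-f_{g,H}/2})$ is an analytic function of the coefficients, which themselves depend analytically (indeed polynomially, after inverting the metric) on $(g,b)$ through $H=db$. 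Hence $\nabla\mu\colon C^{2,\alpha}(M,S^2_{\sst+})\times C^{2,\alpha}(M,\Lambda^2)\to C^{0,\alpha}(M,S^2)\times C^{0,\alpha}(M,\Lambda^2)$ is analytic.

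The second step is to handle the diffeomorphism invariance, which is the main obstacle: $\mu$ is invariant under the action of $\mathrm{Diff}(M)$ (pulling back $g$ and $b$ simultaneously), so $L$ has an infinite-dimensional kernel coming from Lie derivatives $(\mathcal L_X\hat g, \mathcal L_X 0)=(\mathcal L_X\hat g,0)$, and $L$ is not Fredholm on the full space. The standard remedy, exactly as in \cite{BuHa} for Perelman's $\lambda$, is to restrict to a slice: impose the gauge $\mathrm{div}_{\hat g}(g-\hat g)=0$ (Ebin--Bourguignon slice) on the metric part, on which, by Proposition~\ref{hessian}, $L(h,\beta)=-\tfrac12(\Delta^L_{\hat g}h,\,d^*_{\hat g}d\beta)$; both $\Delta^L_{\hat g}$ on divergence-free tensors and $d^*d$ on $2$-forms are self-adjoint elliptic (the latter after a further exact/coexact decomposition, using that only $H=db$ is exact), hence Fredholm with finite-dimensional kernels. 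One then applies the abstract \L ojasiewicz--Simon inequality on the slice and propagates it to a full $C^{2,\alpha}$-neighbourhood of $(\hat g,0)$ using that $\nabla\mu$ and $\lambda$ are $\mathrm{Diff}(M)$-invariant and that every nearby pair can be moved into the slice by a diffeomorphism close to the identity (implicit function theorem), with the norms controlled; the estimate is preserved because the diffeomorphism group acts by isometries on the relevant $L^2$ quantities, up to a bounded factor that is again absorbed into $C$. I would also note that the $2$-form component needs no gauge fixing beyond restricting $b$ to generate exact $H$, so the genuinely delicate part is the metric slice and the invariance argument carrying the inequality off the slice.
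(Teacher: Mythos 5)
Your proposal follows essentially the same route as the paper: restrict to the Ebin slice $\mathrm{div}_{\hat g}h=0$, use analytic perturbation theory \cite{RS} for the lowest eigenvalue, identify the linearization via Proposition~\ref{hessian}, apply \cite[Thm.~6.3]{colding} on the slice, and carry the inequality back to a full neighbourhood by $\mathrm{Diff}(M)$-invariance. The one point to state precisely is that the paper also gauge-fixes the two-form component by including $d^*_{\hat g}\beta=0$ in the slice (otherwise $d^*_{\hat g}d$ has the infinite-dimensional kernel of all closed $2$-forms and is not Fredholm) --- this is exactly the coexact restriction you allude to, but your closing remark that ``the $2$-form component needs no gauge fixing'' underplays it.
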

\begin{proof}
By the Ebin slice theorem \cite{Ebin}, we can find an open neighbourhood $\mathcal{V}$ of $\hat{g}$ in $C^{2,\alpha}(M,S_{\sst+}^2)$  
and a $\sigma>0$ such that every metric $g\in\mathcal{V}$ can be written as $g=\f^*(\hat{g}+h)$, for some $\f\in\mathrm{Dif{}f}(M)$ and some $h\in  C^{2,\alpha}(M,S^2)$ satisfying
${\rm div}_{\hat g}h=0$ and $\|h\|_{C^{2,\alpha}}<\sigma$.

Since \eqref{rotula} is ${\rm Dif{}f}(M)$-invariant and it only involves $H=db$, we may assume that $(g,b)$ belongs to the space  
\[
\mathcal{S} \coloneqq \left\{\hat g+h\in C^{2,\alpha}(M,S^2_{\sst+}) ~|~ {\rm div}_{\hat g} h = 0,~\|h\|_{C^{2,\alpha}}<\sigma\right\} \times 
\left\{\beta\in C^{2,\alpha}(M,\Lambda^2) ~|~ d^{*}_{\hat g}\beta=0\right\}. 
\]
Let us consider the restriction of $\mu$ to $\mathcal{S}$
\[
\mu_\mathcal{S}\colon \mathcal{S}\to \R\,,\quad \mu_\mathcal{S}(g,b)=\lambda(g,db). 
\]
We claim that $\mu_\mathcal{S}$ satisfies the hypothesis of Colding-Minicozzi theorem \cite[Thm.~6.3]{colding}.  
From standard perturbation theory \cite{RS}, it follows that $\mu_\mathcal{S}$ is analytic. 
By Proposition \ref{prop1}, the $L^2(M,{\rm e}^{-f_{g,H}} dV_g)$-gradient of $\mu_\mathcal{S}$ at $(g,b)$ is   
\[
\nabla \mu_{\mathcal{S}}(g,b) = \pi \left(-{\rm Rc}_g-{\rm Hess}_{g}f_{g,H}+\tfrac{1}{4} H^2,-\tfrac{1}{2}(d^*_{g}H-\nabla f_{g,H}\lrcorner H) \right),
\]
where $H=db$ and $\pi$ is the $L^2(M,{\rm e}^{-f_{g,H}} dV_g)$-orthogonal projection onto the tangent space  
\[
T_{(\hat g,0)}\mathcal S=\{h\in C^{2,\alpha}(M,S^2) ~|~ {\rm div}_{\hat g} h = 0\} \oplus \{\beta\in C^{2,\alpha} ~|~ d^{*}_{\hat g}\beta=0\}. 
\]
From standard elliptic theory it follows that
\[
\begin{split}
\|\nabla\mu_\mathcal{S}(g_1,b_1)-\nabla\mu_\mathcal{S}(g_2,b_2)\|_{C^{0,\alpha}} 	&\leq C\|(g_1-g_2,b_1-b_2)\|_{C^{2,\alpha}},\\
\|\nabla\mu_\mathcal{S}(g_1,b_1)-\nabla\mu_\mathcal{S}(g_2,b_2)\|_{L^2}			&\leq C\|(g_1-g_2,b_1-b_2)\|_{W^{2,2}},
\end{split}
\]
for every $(g_1,b_1)$ and $(g_2,b_2)$ in $\mathcal{S}$.  
From Proposition \ref{hessian}, the linearization of $\nabla \mu_\mathcal{S}$ at $(\hat{g},0)$ is 
\[
L_\mathcal{S}=-\frac12\left(\Delta_{\hat g}^{L}\,,d^*_{\hat g}d\,  \right). 
\]
Our claim then follows, and by \cite[Thm.~6.3]{colding} there exists $\theta \in \left(0,\tfrac12\right]$ such that 
\[
\left\|\nabla \mu_\mathcal{S}(g,b)\right\|_{L^2(M,{\rm e}^{-f_{g,H}}dV_g)}\geq  |\mu_\mathcal{S}(g,b)|^{1-\theta}\,. 
\]
for every $(g,b)\in \mathcal{S}$. This last formula implies \eqref{rotula}. 
\end{proof} 

\smallskip
\begin{lemma}\label{DeTurk}
Let $(\bar{g},\bar{H})\in C^{\infty}(M,{S}^2_{\sst+}) \times C^{\infty}(M,\Lambda^3)$ be a stationary point of the gRF. 
For every $T>0,~\varepsilon>0$, there exists $\delta>0$ such that if $(g_0,H_0)\in C^{\infty}(M,{S}^2_{\sst+}) \times C^{\infty}(M,\Lambda^3)$, with $dH_0=0$, satisfies  
\[
  \|(g_0-\bar g,H_0- \bar{H})\|_{C^{\infty}} < \delta,
\]
then the solution $(g_t,H_t)$ of the gRF starting at $(g_0,H_0)$ is defined for $t\in [0,T')$, with $T'>T$, 
and there exists a smooth family of diffeomorphisms $\{\varphi_t\}_{t\in [0,T')}$ such that $\varphi_0={\rm Id}$ and 
\[
\|(\varphi_t^*(g_t) -\bar{g},\varphi_t^*(H_t) -\bar{H}\|_{C^{\infty}}<\varepsilon.  
\] 
\end{lemma}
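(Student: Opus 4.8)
The plan is to reduce the statement to a strongly parabolic system by a DeTurck-type gauge fixing adapted to the gRF, for which short-time existence, uniqueness and smooth dependence on the initial datum are available, to use that $(\bar g,\bar H)$ is a \emph{stationary point of the gauged flow} in order to obtain stability on a prescribed time interval, and finally to transfer this back to the gRF itself by undoing the gauge via an ODE. Fix the background metric $\bar g$ and let $W(g)=W(g,\bar g)\in C^\infty(M,TM)$ be the usual DeTurck vector field (locally, the $g$-trace of the difference of the Christoffel symbols of $g$ and $\bar g$). One considers the \emph{gRF-DeTurck flow}
\[
\begin{dcases}
\frac{\partial}{\partial t}g_t = -2\,\mathrm{Rc}_{g_t}+\tfrac12 H_t^2 + \mathcal{L}_{W(g_t)}g_t,\\
\frac{\partial}{\partial t}H_t = \Delta_{g_t}H_t + \mathcal{L}_{W(g_t)}H_t.
\end{dcases}
\]
By DeTurck's computation the metric equation has the principal symbol of the Laplacian of $g_t$, while the $H$-equation is parabolic since $\Delta_{g_t}$ is minus the Hodge Laplacian and the added Lie-derivative term is of lower order; as the coupling between the two equations is of lower order relative to each principal part, the system is strongly parabolic and quasilinear. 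Moreover $W(\bar g)=0$, and since $(\bar g,\bar H)$ satisfies the stationary conditions \eqref{fixedgRF}, the pair $(\bar g,\bar H)$ is a fixed point of the gRF-DeTurck flow. Finally, because $d$ commutes with $\mathcal{L}_{W(g)}$ and $d\Delta_{g}H=-d\,d^{*}_{g}dH$, the constraint $dH_t=0$ is preserved along the gauged flow whenever $dH_0=0$.

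Following \cite{Ham,BryantXu}, I would apply the Nash--Moser inverse function theorem \cite{HamiltonNash} (alternatively, invoke the well-posedness of the gRF established in \cite{libro} together with the DeTurck correspondence described below) to obtain, for the gauged flow, short-time existence and uniqueness of a smooth solution for every smooth initial datum, together with smooth dependence on the initial datum — in particular continuity in the $C^\infty$-topology on compact time intervals — and lower semicontinuity of the maximal existence time. Since the constant $(\bar g,\bar H)$ is an eternal solution, given $T>0$ and $\varepsilon>0$ there is $\delta>0$ such that, whenever $\|(g_0-\bar g,H_0-\bar H)\|_{C^\infty}<\delta$, the gauged solution $(\tilde g_t,\tilde H_t)$ with $(\tilde g_0,\tilde H_0)=(g_0,H_0)$ is defined on some interval $[0,T')$ with $T'>T$ and satisfies $\|(\tilde g_t-\bar g,\tilde H_t-\bar H)\|_{C^\infty}<\tfrac{\varepsilon}{2}$ for all $t\in[0,T')$.

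To undo the gauge, set $W_t\coloneqq W(\tilde g_t)$; this is a smooth family of vector fields with $\|W_t\|_{C^\infty}$ small, and solving the ODE $\partial_t\phi_t=-W_t\circ\phi_t$ with $\phi_0={\rm Id}$ produces, by compactness of $M$, a smooth family of diffeomorphisms $\phi_t$, $t\in[0,T')$, close to the identity. A standard computation then shows that $(g_t,H_t)\coloneqq(\phi_t^*\tilde g_t,\phi_t^*\tilde H_t)$ solves the gRF \eqref{auxiliar} with $dH_t=0$ and initial datum $(\phi_0^*\tilde g_0,\phi_0^*\tilde H_0)=(g_0,H_0)$; by uniqueness for the gRF this is the solution appearing in the statement, which is therefore defined on $[0,T')$ with $T'>T$. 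Setting $\varphi_t\coloneqq\phi_t^{-1}$ one has $\varphi_0={\rm Id}$ and $(\varphi_t^*g_t,\varphi_t^*H_t)=(\tilde g_t,\tilde H_t)$, whence $\|(\varphi_t^*g_t-\bar g,\varphi_t^*H_t-\bar H)\|_{C^\infty}<\tfrac{\varepsilon}{2}<\varepsilon$ for all $t\in[0,T')$, which is the assertion.

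The main obstacle is the quantitative stability step for the gauged flow: propagating the $C^\infty$-smallness of $(g_0-\bar g,H_0-\bar H)$ to $C^\infty$-smallness of $(\tilde g_t-\bar g,\tilde H_t-\bar H)$ up to the prescribed time $T$ — equivalently, continuous dependence of the solution operator on $[0,T]$ together with lower semicontinuity of the existence time — for the coupled quasilinear parabolic system in $(g_t,H_t)$. In the Nash--Moser framework this is precisely the assertion that the solution operator is a smooth tame map, and establishing it rests on uniform Schauder-type a priori estimates and the corresponding bootstrap. A secondary technical point is the verification that the DeTurck-modified system is genuinely strongly parabolic as a coupled system and that $dH=0$ is preserved along both the gauged and the ungauged flows, as used above.
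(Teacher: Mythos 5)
Your proposal follows essentially the same route as the paper: DeTurck gauge-fixing of the gRF with the vector field $X_g={\rm tr}_g(D_g-D_{\bar g})$, the Nash--Moser inverse function theorem applied in the fashion of Hamilton and Bryant--Xu (with the integrability condition $dK=0$ ensuring the 3-form stays closed), and undoing the gauge by integrating $-X_{g_t}$. The step you flag as the main remaining obstacle --- propagating $C^\infty$-closeness up to the prescribed time $T$ --- is exactly what the paper's formulation delivers at once: the map $F$ is defined on sections over $M\times[0,T]$ with a tame grading, so invertibility of $F$ near the constant solution $(\bar g,\bar H)$ already encodes both existence past $T$ and $\varepsilon$-closeness on the whole interval.
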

\begin{proof}
Let $g$ be a Riemannian metric on $M.$ Following the proof of the short-time existence of the gRF \cite[Sect.~5.2]{libro}, we denote by $X_g$ the vector field 
\[
X_g={\rm tr}_g(D_g-D_{\bar{g}}),
\]
where $D_g$ is the Levi-Civita connection of $g$.  

We let
\[
\mathscr{F} \coloneqq C^{\infty}(M\times [0,T],S^2_{\sst+})\times 
C^{\infty}(M\times [0,T],\Lambda^3) 
\]
and 
\[
\mathscr{G} \coloneqq \mathscr{F}\times C^{\infty}(M,S^2_{\sst+})\times
C^{\infty}(M,\Lambda^3) \,.
\]
Both $\mathscr{F}$ and $\mathscr{G}$ are tame Fr\'echet spaces with respect to the gradings  
\[
\|(g, H)\|_{n}	= \,\sum_{2j\leq n}\int_{0}^T\|(\partial_t^{j} g_t,\partial_t^{j} H_t)\|_{W^{n-2j,2}}\,dt,
\]
and 
\[
\|(g, H),(g', H')\|_n =\, \|(g, H)\|_{n} + \|(g', H')\|_{W^{n,2}},
\]
respectively. 
Consider the map $F\colon \mathscr{F} \to \mathscr{G}$ defined by 
\[
F(g,H) = \left( \left(\partial_tg + 2\,\Rc_{g} - \tfrac12 H^2 - \mathcal{L}_{X_g}g, \partial_tH + dd^*_{g}H - d (X_g\lrcorner H)\right), 
(g|_{t=0},H|_{t=0})\right),
\]
where $\mathcal L$ denotes the Lie derivative.
Note that if $H$ is closed we have 
\[
F(g,H) = \left( \left(\partial_tg + 2\,\Rc_{g} - \tfrac12 H^2 - \mathcal{L}_{X_g}g, \partial_tH - \Delta_{g}H - \mathcal{L}_{X_g} H\right), 
(g|_{t=0},H|_{t=0})\right).
\]
From the results of \cite[Sect.~5.2]{libro}, it follows that  
\[
\left. F \right|_{*(g,H)}(h,K) =
 \left(\partial_t h-\Delta^L_{g} h+\Phi(h,K),\partial_tK-\Delta_{g} K+d\Psi(h,K),\left(h|_{t=0},K|_{t=0}\right)\right),
\]
for every closed 3-form $K$, where $\Phi(h,K)$ is first order in $h,K$ and $\Psi$ is first order in $h$ and zeroth order in $K$. 
Hence, the assumptions of the Hamilton-Nash-Moser Theorem \cite[Thm.~5.1]{Ham} with integrability condition $dK=0$ are satisfied, 
and from the proof of  \cite[Thm.~5.1]{Ham} it follows that  $F |_{*(g,H)}$ is an isomorphism for every $(g,H)\in C^{\infty}(M,S^2_{\sst+})\times C^{\infty}(M,\Lambda^3)$.

Let $(\bar{g}_t, \bar{H}_t) \equiv (\bar{g},\bar H)\in\mathscr{F}$. Since 
\[
F(\bar{g},\bar H) = \left(\left(0,0\right),\left( \bar{g},\bar H\right)\right), 
\]
the map $F$ is invertible from an open neighbourhood of $(\bar{g},\bar H)$ in $\mathscr{F}$ to an open neighbourhood of $\left(\left(0,0\right),\left(\bar{g},\bar H\right)\right)$ in $\mathscr{G}$. 
Thus, for every $\varepsilon>0$ there exists $\delta>0$ such that if $(\tilde g_0,\tilde H_0)\in C^{\infty}(M,S^2_{\sst+}) \times C^{\infty}(M,\Lambda^3)$ satisfies 
$\|(\tilde g_0- \bar{g}, \tilde H_0 - \bar{H})\|_{C^{\infty}}<\delta$, then the initial value problem  
\[
\begin{dcases}
\tfrac{\partial}{\partial t}\tilde g_t = -2\,\Rc_{ \tilde g_t}+\tfrac12 \tilde H_t^2 +\mathcal{L}_{X_{\tilde{g}_t}}\tilde{g}_t, \\   \
\tfrac{\partial}{\partial t}\tilde H_t = -dd^*_{\tilde g_t}\tilde H_t + d (X_{\tilde{g}_t}\lrcorner \tilde{H}_t),\\  
\left. \tilde{g}_t \right|_{t=0} = \tilde{g}_0,\\
\tilde{H}_t|_{t=0} = \tilde{H}_0,
\end{dcases}
\]
has a unique solution $(\tilde g_t,\tilde H_t)$ defined for $t\in [0,T')$, with $T'>T$, and such that $\|(\tilde g_t - \bar{g}, \tilde H_t - \bar{H})\|_{C^\infty}<\varepsilon$. 
If we further assume that $d\tilde H_0=0$, then $\tilde{H}_t$ stays closed for every $t\in [0,T')$, and $(\tilde{g}_t,\tilde{H}_t)$   
solves the generalized DeTurck-Ricci flow 
\[
\begin{dcases}
\tfrac{\partial}{\partial t}\tilde g_t = -2\,\Rc_{ \tilde g_t}+\tfrac12 \tilde H_t^2+\mathcal{L}_{X_{\tilde g_t}} \tilde g_t,\\
\tfrac{\partial}{\partial t}\tilde H_t = \Delta_{\tilde g_t}\tilde H_t+\mathcal{L}_{X_{\tilde g_t}}\tilde H_t,\\
d\tilde H_t=0, \\
\left.  \tilde{g}_t \right|_{t=0} = \tilde g_0,\\
\tilde{H} |_{t=0} = \tilde H_0.
\end{dcases}
\]
The thesis follows by choosing $\{\psi_t\}\in{\rm Dif{}f}(M)$ solving
\[
\partial_t\psi_t = - X_{g_t}\circ \psi_t\,,\quad \psi_0={\rm Id},  
\] 
so that $(g_t,H_t) = (\psi_{t}^*(\tilde{g}_t),\psi_{t}^*(\tilde{H}_t))$ is a solution of the gRF. 
\end{proof} 

\smallskip
We are now ready to prove Theorem \ref{main1}.

\begin{proof}[Proof of Theorem $\ref{main1}$] 
Let $B_{r}$ denote the ball of radius $r$ and center $(\hat g,0)$ in $C^{\infty}(M,S^2_{\sst+})\times dC^{\infty}(M,\Lambda^2)$, where the $C^k$-norms are defined using $\hat{g}$.   
Since $\mu(\hat{g},0)=0$ and $(\hat{g},0)$ is a local maximizer of $\mu$, in view of Lemma \ref{LSineq} we choose $\varepsilon>0$ so that for every $(g,db)\in B_{\varepsilon}$ we have 
$\mu(g,b) = \lambda(g,db)\leq 0$ and
\[
\|\left({\rm Rc}_g+{\rm Hess}_{g}f_{g,H}-\tfrac{1}{4} H^2,\tfrac{1}{2}d^*_{g}H+\tfrac{1}{2}\nabla f_{g,H}\lrcorner H \right)\|_{L^2(M,{\rm e}^{-f_{g,H}}dV_g)}\geq |\lambda(g,H)|^{1-\theta}\,,\\  
\]
for some $\theta\in\left(0,\tfrac12\right]$, with $H=db$.

By Lemma \ref{DeTurk}, there exists $0<\delta<\varepsilon$ small enough so that for every $(g_0,H_0)\in B_{\delta}$ the gRF starting at $(g_0,H_0)$ has a solution $(g_t,H_t)$ 
defined for $t\in [0,T')$, with $T'>1$,  
and there exists a smooth family of diffeomorphisms $\{\varphi_t\}$ such that $\varphi_{0}={\rm Id}$ and $(\varphi_t^*g_t,\varphi_t^*H_t)\in B_{\varepsilon/4}$, for every $t\in [0,T')$. 

Fix $(g_0,H_0)\in B_\delta$, and let $T>0$ be the maximal time such that for every $t\in [0,T)$ there exists $\varphi_t\in {\rm Dif{}f}(M)$ such that $(\varphi_{t}^*g_t,\varphi_t^*H_t)\in B_{\varepsilon}$. 
Our choice of $\delta$ implies that $T \geq 1$ and that there exists $\varphi_1 \in {\rm Dif{}f}(M)$ such that $(\varphi_1^* g_1, \varphi_1^* H_1)\in B_{\varepsilon/4}$.

Let $X_t=-\nabla f_{t}$, where $f_t \coloneqq f_{\varphi_1^* g_t, \varphi_1^* H_t}$ and 
the gradient is taken with respect to $\varphi_1^* g_t$, and let $\{\psi_t \}_{t\in [0,T)}\subseteq {\rm Dif{}f}(M)$ be the family of diffeomorphisms generated by $X_t$ and satisfying $\psi_1={\rm Id}.$   
For $t\in [0,T)$, let $\tilde{\f}_t = \varphi_1 \circ \psi_t$ and let $\tilde g_t=\tilde{\varphi}_t^*g_t$, $\tilde H_t = \tilde{\varphi}_t^*H_t$.  Then 
\[
\partial_t\tilde g_t=-2({\rm Rc}_{\tilde g_t}+{\rm Hess}_{\tilde g_t}\tilde f_{t})+ \tfrac{1}{2}\tilde H^2_t,\quad 
\partial_t\tilde H_t = \Delta_{\tilde{g_t}}\tilde H_t-d\left(\nabla\tilde f_t\lrcorner \tilde H_t\right), 
\]
where $\tilde{f}_t = f_{\tilde{\varphi}_t^*g_t,\tilde{\varphi}_t^*H_t}$.

Now, we can write $\tilde H_t = d \tilde b_t$, with $\tilde{b}_t$ solving
\[
\partial_t\tilde b_t=     - d^*_{\tilde g_t}d \tilde{b}_t   - \nabla \tilde{f}_{t}\lrcorner d\tilde{b}_t. 
\]
By interpolation we then have 
\begin{equation}\label{interpolalo}
\|(\partial_t \tilde g_t,\partial_t\tilde H_t)\|_{C^{k}}= \|(\partial_t \tilde g_t,d\partial_t \tilde b_t)\|_{C^{k}}\leq 
\|(\partial_t \tilde g_t,\partial_t \tilde b_t)\|_{C^{k+1}}\leq C\|(\partial_t \tilde g_t,\partial_t \tilde b_t)\|_{L^2}^{1-\eta},
\end{equation}
for some $\eta\in(0,1)$. 
We let 
\[
T'' \coloneqq \sup\{ t\in [1,T] ~|~ (\tilde g_t,\tilde H_t)\in B_\varepsilon\}, 
\]
and $\sigma \coloneqq \theta-\eta+\theta\eta>0$. 
Then, by Lemma \ref{rotula} and the inequality \eqref{interpolalo}, we obtain 
\[
\begin{split}
-\frac{d}{dt}|\mu(\tilde g_t,\tilde b_t)|^{\sigma}	&=\,\sigma |\mu(\tilde g_t,\tilde b_t)|^{\sigma-1} \frac{d}{dt} \mu(\tilde g_t,\tilde b_t)\\
									&=\,2\sigma |\mu(\tilde g_t,\tilde b_t)|^{(\theta-1)(1+\eta)} \left\|\left({\rm Rc}_{\tilde g_t}+{\rm Hess}_{\tilde g_t}\tilde f_{t}-\tfrac{1}{4} \tilde H^2_t,
									\tfrac{1}{2}d^*_{\tilde g_t}\tilde H_t+\tfrac{1}{2}\nabla\tilde f_{t}\lrcorner \tilde H \right)\right\|_{L^2}^{1+\eta}\|(\partial_t\tilde g_t,\partial_t\tilde b_t)\|^{1-\eta}_{L^2}\\
									&\geq \frac{\sigma}{C}\|(\partial_t\tilde g_t,\partial_t\tilde b_t)\|_{C^{k}}\,.
\end{split}
\]
Therefore, for every integer $k$ sufficiently large we have
\[
\begin{aligned}
\int_1^{T''}\|(\partial_t\tilde g_t,\partial_t\tilde H_t)\|_{C^{k-1}}\,dt & \leq \int_1^{T''}\|(\partial_t\tilde g_t,\partial_t\tilde b_t)\|_{C^{k}}\,dt\leq \frac{C}{\sigma}|\mu(\tilde g_1,\tilde b_1)|^{\sigma}& \\
&\leq  \frac{C}{\sigma}|\mu( \tilde{g}_0, \tilde{b}_0)|^{\sigma}	=\frac{C}{\sigma}|\lambda(g_0, db_0)|^{\sigma},
\end{aligned}
\]
where the last inequality follows since 
\[
\partial_t\tilde g_t = -{\rm Rc}_{\tilde g_t} -{\rm Hess}_{\tilde g_t}\tilde f_{t}+ \tfrac{1}{4}\tilde H^2_t,\quad  \partial_t\tilde b_t=     -\tfrac12 d^*_{\tilde g_t}d \tilde{b}_t  -\tfrac12 \nabla \tilde{f}_{t}\lrcorner d\tilde{b}_t , 
\] 
is the gradient flow of $\mu$ (cf.~Proposition \ref{prop1}), while the last equality follows from the diffeomorphism invariance of $\mu$.

Now, up to shrinking $\delta$, we may assume that $\frac{C}{\sigma}|\lambda(g_0, H_0)|^{\sigma}\leq \tfrac{\varepsilon}{4}$. 
This implies that $T''=\infty$, as otherwise we would have $(\tilde g_{T''},\tilde H_{T''})\in B_{\varepsilon/2}$, which is a contradiction. 
Hence, $(\tilde g_t, d\tilde b_t)$ is defined for every positive $t$, and it converges in the 
$C^{\infty}$-topology to a pair $(\tilde g_{\infty},d\tilde{b}_{\infty})\in B_\varepsilon$. 
In particular,  $\mu(\tilde g_{\infty},\tilde{b}_{\infty}) = \lambda(\tilde g_{\infty},d\tilde{b}_{\infty}) \leq 0$. 
Since $(\partial_t\tilde g_t,\partial_t\tilde b_t)\to 0$ in the $C^\infty$-topology, we also have $\nabla\mu(\tilde g_{\infty},\tilde b_{\infty})=0$.
We can then apply Lemma \ref{grad0} to conclude that $ d\tilde b_{\infty}=0$ and $\Rc_{\tilde g_{\infty}}=0$. 
\end{proof}

\bigskip\noindent
{\bf Acknowledgements.} The authors would like to thank Reto Buzano, Mario Garcia Fern\'andez, Fabio Paradiso and Jeffrey Streets for useful comments and conversations.  
The authors were supported by GNSAGA of INdAM. 
A.R.~was also supported by the project PRIN 2017  ``Real and Complex Manifolds: Topology, Geometry and Holomorphic Dynamics''.


\begin{thebibliography}{12}

\bibitem{BryantXu}
R. Bryant, F. Xu: Laplacian Flow for Closed G$_2$-Structures: Short Time Behavior.  \href{https://arxiv.org/abs/1101.2004}{arXiv:1101.2004}.  

\bibitem{1}
C. G. Callan, D. Friedan, E. J. Martinec, M. J. Perry: Strings in background fields. {\em Nuclear Phys.~B} {\bf 262} (4), 593--609 (1985).

\bibitem{colding}
T.H. Colding, W.P. Minicozzi II: On uniqueness of tangent cones for Einstein manifolds. {\em Invent.~Math.} {\bf 196} (3), 515--588 (2014).

\bibitem{Ebin}
D. Ebin: The manifold of Riemannian metrics. {\em Proc.~Symp.~AMS} {\bf 15}, 11--40 (1970). 

\bibitem{libro}
M. Garcia Fern\'andez, J. Streets: Generalized Ricci Flow. To appear in University Lecture Series, American Mathematical Society.  

\bibitem{HamiltonNash}
R. S. Hamilton:  The inverse function theorem of Nash and Moser.  {\em Bull.~Amer.~Math.~Soc.} {\bf 7} (1), 65--222 (1982).

\bibitem{Ham}
R. S. Hamilton: Three-manifolds with positive Ricci curvature. {\em J.~Differential Geom.} {\bf 17} (2),  255--306 (1982).

\bibitem{T}
R. Haslhofer: Perelman's lambda-functional and the stability of Ricci-flat metrics. {\em Calc.~Var.~Partial Differ.~Equ.} {\bf 45}, 481--504 (2012).

\bibitem{KlLo}
B. Kleiner, J. Lott: Notes on Perelman's papers. {\em Geom.~Topol.} {\bf12} (5), 2587--2855 (2008).

\bibitem{BuHa}
R. M\"uller, R. Haslhofer: Dynamical stability and instability of Ricci-flat metrics. {\em Math.~Ann.} {\bf 360} (1--2), 547--553 (2014). 

\bibitem{2}
T. Oliynyk, V. Suneeta, E. Woolgar: A gradient flow for worldsheet nonlinear sigma models. {\em Nuclear Phys.~B}, {\bf 739} (3), 441--458 (2006). 

\bibitem{Par} 
F. Paradiso: Generalized Ricci flow on nilpotent Lie groups.  \href{https://arxiv.org/abs/2002.01514}{arXiv:2002.01514}.

\bibitem{Per} 
G. {Perelman}: The entropy formula for the Ricci flow and its geometric applications. \href{http://arxiv.org/abs/math/0211159}{arXiv:math/0211159}.

\bibitem{Sesum}
N. Sesum: Linear and dynamical stability of Ricci-flat metrics. {\em Duke Math.~J.} {\bf 133}, 1--26 (2006). 

\bibitem{Streets}
J. Streets: Regularity and expanding entropy for connection Ricci flow. {\em J.~Geom.~Phys.} {\bf 58} (7), 900--912 (2008).

\bibitem{Streets2}
J. Streets: Generalized geometry, T-duality, and renormalization group flow. {\em J.~Geom.~Phys.} {\bf114}, 506--522 (2017).

\bibitem{Streets3}
J. Streets: Pluriclosed flow and the geometrization of complex surfaces. In: Chen J., Lu P., Lu Z., Zhang Z. (eds) Geometric Analysis. Progress in Mathematics, vol.~333. Birkh\"auser, Cham. (2020)

\bibitem{Streets4}
J. Streets: Classification of solitons for pluriclosed flow on complex surfaces. {\em Math.~Ann.} {\bf375} (3-4), 1555--1595 (2019).

\bibitem{StTi0}
J. Streets, G. Tian: A parabolic flow of pluriclosed metrics. {\em Int.~Math.~Res.~Not.}  {\bf16}, 3101--3133 (2010). 

\bibitem{StTi1}
J. Streets, G. Tian: Hermitian curvature flow. {\em J.~Eur.~Math.~Soc.} {\bf13} (3), 601--634 (2011). 

\bibitem{StTi}
J. Streets, G. Tian: Generalized K\"ahler geometry and the pluriclosed flow. {\em Nuclear Phys. B} {\bf858} (2), 366--376  (2012). 

\bibitem{StTi2}
J. Streets, G. Tian: Regularity results for pluriclosed flow. {\em Geom.~Topol.} {\bf17} (4), 2389--2429 (2013). 

\bibitem{StUs}
J. Streets, Y. Ustinovskiy: Classification of Generalized K\"ahler--Ricci Solitons on Complex Surfaces. 
To appear in {\em Comm.~Pure Appl.~Math.} \href{https://doi.org/10.1002/cpa.21947}{https://doi.org/10.1002/cpa.21947}

\bibitem{RS}
M. Reed, B. Simon: {\em Methods of Modern Mathematical Physics. IV. Analysis of Operators.} Academic Press, New York (1978).


\end{thebibliography}
\end{document}